\documentclass[12pt]{article}
\usepackage{amsmath, amssymb}
\usepackage{amsthm}
\usepackage{graphicx}
\usepackage{hyperref}
\usepackage[T1]{fontenc}
\usepackage{listings}
\usepackage[all]{xy}
\usepackage{graphicx}
\graphicspath{ {./Images/} }

\newcommand{\Set}{\mathbf{Set}}

\newtheorem{proposition}{Proposition}
\newtheorem{remark}{Remark}

\newtheorem{example}{Example}

\title{On the zero-classes of monoid semi-congruences}
\author{M. Hoefnagel, N. Martins-Ferreira and M. Sobral\\
}

\begin{document}

\maketitle

\begin{abstract}

This paper studies the zero-classes of monoid semi-congruences, understood as internal reflexive relations on a monoid. Classical examples include normal submonoids, which arise as zero-classes of congruences, and positive cones, which are the zero-classes of preorders; both admit well-known syntactic characterizations via the Eilenberg syntactic equivalence and the syntactic preorder introduced by Pin, respectively. Beyond these cases, however, no general notion of a syntactic object characterizing zero-classes of semi-congruences, also called clots, has been established.

We address this gap by introducing a syntactic relation that is reflexive and characterizes clots whenever it is compatible with the monoid operation, a property that is not automatic in contrast to the congruence and preorder settings. We further develop a hierarchy of conditions on submonoids of a given monoid that induce syntactic relations with progressively stronger properties: first ensuring compatibility with the monoid operation, and subsequently enforcing additional relational properties such as transitivity and symmetry. Several illustrative situations are discussed, including the cases of positive cones and normal submonoids.

\end{abstract}
\noindent
\textbf{keywords:} clot, syntactic congruence, semi-congruence, monoid. \\ 
\noindent\textbf{MSC (2020):} 20M32, 20M50, 18A22.

\section{Introduction}

For a submonoid $M$ of a monoid $A$, slightly generalizing the notion of {\em syntactic equivalence} introduced by S.~Eilenberg \cite{E} and that of {\em syntactic preorder} due to J.-E.~Pin \cite{P,HP}, we introduced in \cite{MFS3} the notions of {\em $M$-equivalence relation}, which is always a congruence, and of {\em $M$-preorder relation}, which is also always compatible with the monoid operation, that is, both are internal relations.

In the same work, we associated to each submonoid $M$ a reflexive relation $R_M$ on $A$, called the {\em $M$-reflexive relation} whenever $R_M$ is internal. The fact that this relation is not always internal was proved by the first author, and a corresponding counterexample is presented below. Indeed, this already follows from an example given in \cite{MFS3}, which was previously overlooked, as explained before Example~2.

{\em Clots} are the zero-classes of internal reflexive relations. Their first categorical definition was presented in \cite{JMU} for semi-abelian categories and later in \cite{M} for regular categories with finite coproducts, where internal reflexive relations are called {\em semi-congruences}.

\section{Preliminaries}

The {\em syntactic equivalence} of Eilenberg \cite{E} and the {\em syntactic preorder} of J.-E.~Pin \cite{P,HP} were originally defined for pairs $(A,M)$ where $A$ is a free monoid and $M$ is a subset of $A$. We slightly generalize these notions by considering, for a subset $M$ of an arbitrary monoid $A$, the corresponding {\em $M$-congruence} and {\em $M$-preorder}, and by introducing, for such pairs, the notion of an {\em $M$-relation}.

Let $A$ be a monoid and let $M$ be a subset of $A$, not necessarily a submonoid. We define on $A$ an $M$-congruence, an $M$-preorder, and an $M$-relation as follows:
\begin{enumerate}
\item The $M$-congruence is denoted by $\sim_{M}$. An element $a\in A$ is said to be $M$-congruent to $b\in A$, written $a\sim_{M}b$, if and only if, for every $x,y\in A$,
\[
xay \in M \;\Leftrightarrow\; xby \in M.
\]

\item The $M$-preorder is denoted by $\leq_{M}$. An element $a\in A$ is said to be $M$-preceded by $b\in A$, written $a\leq_{M}b$, if and only if, for every $x,y\in A$,
\[
xay \in M \;\Rightarrow\; xby \in M.
\]

\item The $M$-relation is denoted by $R_{M}$. An element $a\in A$ is said to be $M$-related to $b\in A$, written $aR_{M}b$, if and only if, for every $x,y\in A$,
\[
(xay = 1) \;\Rightarrow\; xby \in M.
\]
\end{enumerate}

\begin{proposition}
Let $M$ be a subset of a monoid $A$.
\begin{enumerate}
\item The relation $\sim_{M}$ is an internal equivalence relation in the category of monoids.
\item The relation $\leq_{M}$ is an internal preorder in the category of monoids.
\item The relation $R_{M}$ is reflexive if and only if $1\in M$.
\item If $1\in M$, then for every $a,b,c\in A$, $aR_{M}b$ implies $acR_{M}bc$ and $caR_{M}cb$.
\item The relation $R_M$ is an internal relation in the category of monoids if and only if, for every $a,a',b,b'\in A$,
\[
\forall x_1,y_1\in A,\ (x_1aa'y_1=1)\Rightarrow x_1bb'y_1\in M
\]
holds whenever
\[
\left\lbrace
\begin{array}{l}
\forall x_2,y_2\in A,\ (x_2ay_2=1)\Rightarrow x_2by_2\in M, \\[2pt]
\forall x_3,y_3\in A,\ (x_3a'y_3=1)\Rightarrow x_3b'y_3\in M.
\end{array}
\right.
\]
\end{enumerate}
\end{proposition}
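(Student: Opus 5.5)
The plan is to verify the five items in order, each directly from the definitions. The only ingredient beyond unwinding quantifiers is the observation that a relation on a monoid is internal (a submonoid of $A\times A$) exactly when it is reflexive at the unit, so $(1,1)$ lies in it, and compatible with multiplication: $aRb$ and $a'Rb'$ imply $aa'R\,bb'$.

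For item 1, reflexivity, symmetry and transitivity of $\sim_M$ are immediate, since the defining condition is a biconditional quantified over all $x,y$. For compatibility I will first show that $\sim_M$ is stable under left and right multiplication. Suppose $a\sim_M b$ and $c\in A$. For any $x,y$, applying the definition with the pair $(x,cy)$ gives $xacy\in M\Leftrightarrow xbcy\in M$, so $ac\sim_M bc$; symmetrically $ca\sim_M cb$. Then $a\sim_M b$ and $a'\sim_M b'$ give $aa'\sim_M ba'\sim_M bb'$, and transitivity yields $aa'\sim_M bb'$. Item 2 is the same argument with $\Rightarrow$ in place of $\Leftrightarrow$, using only reflexivity and transitivity of implication.

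For item 3, suppose $1\in M$ and fix $a$. If $xay=1$, then $xay\in M$, so $aR_Ma$. Conversely, if $R_M$ is reflexive, take $a=1$ and $x=y=1$: then $xay=1$, hence $1=x1y\in M$.

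For item 4, assume $aR_Mb$ and let $x(ac)y=1$. Rewriting this as $xa(cy)=1$ and applying the hypothesis to the pair $(x,cy)$ gives $xb(cy)=x(bc)y\in M$, so $acR_Mbc$. The case $caR_Mcb$ is symmetric, using the pair $(xc,y)$. The hypothesis $1\in M$ is not actually needed for this step; it is included so that $R_M$ is reflexive.

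For item 5, the displayed condition is, after unwinding the definition of $R_M$, literally the statement ``$aR_Mb$ and $a'R_Mb'$ imply $aa'R_Mbb'$''. So $R_M$ is internal if and only if it contains $(1,1)$ and this closure condition holds. The one point needing care is whether the statement implicitly assumes $1\in M$, which is the standing assumption under which $R_M$ is called the $M$-reflexive relation.
\begin{itemize}
\item If $1\in M$, then $(1,1)\in R_M$ by item 3, and the equivalence is immediate.
\item If $1\notin M$, the displayed condition still holds for every quadruple, but $R_M$ is in fact empty. Indeed, any pair $(a,b)$ can be tested with $a$ invertible: taking $x=a^{-1}$ and $y=1$ gives $xay=1$, which forces $a^{-1}b\in M$. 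This does not immediately produce a contradiction, so I expect this degenerate case to be the main subtlety.
\end{itemize}
I will handle it by reading ``internal relation'' as ``compatible with multiplication and containing $(1,1)$'' under the standing hypothesis $1\in M$ from item 4. If necessary, I will make that hypothesis explicit in item 5 rather than resolve the $1\notin M$ case, where the claimed equivalence may fail.
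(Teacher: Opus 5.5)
Your verification of items 1--4, and of item 5 once $1\in M$ is assumed, is correct. The paper states this proposition without proof (it is recalled from the authors' earlier work). Your direct unwinding is the expected argument: one-sided stability $ac\sim_M bc$, $ca\sim_M cb$ via the test pairs $(x,cy)$ and $(xc,y)$, then transitivity, and the same for $\leq_M$ and $R_M$.

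What must be fixed is your bullet on the case $1\notin M$, where both assertions are false. First, $R_M$ need not be empty. In $A=(\mathbf{Z},+)$ with $M=\{1\}$ (the integer $1$, not the neutral element $0$), one gets $aR_Mb$ iff $b-a=1$. Also, in any monoid with a zero element $z\neq 1$, one has $zR_Mb$ for all $b$ vacuously, because $xzy=z\neq 1$. Second, the displayed condition does not hold for every quadruple: in the same example $(0,1)$ lies in $R_M$, but $(0,1)+(0,1)=(0,2)$ does not.

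The degenerate case is actually settled by your own item~3 argument, which shows $(1,1)\in R_M$ iff $1\in M$. If ``internal'' means a submonoid of $A\times A$, item 5 is therefore genuinely false when $1\notin M$. For $A=(\mathbf{Z},+)$ and $M=\{n\in\mathbf{Z}\mid n>0\}$, $R_M$ is the strict order $<$, which satisfies the displayed condition but does not contain $(0,0)$. If instead ``internal'' means only ``compatible with the multiplication'', as the paper's introduction phrases it, item 5 is a tautology for every subset $M$. Either way, adding the hypothesis $1\in M$ to item 5 is necessary rather than a mere precaution, and with it your proof is complete.
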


Let $S\subseteq A\times A$ be a relation on $A$. By the \emph{zero-class} of $S$ we mean the set
\[
\{u\in A \mid 1Su\},
\]
which will be denoted by $[1]_S$.

From now on, we restrict to the case where $M$ is a submonoid of $A$. We say that $M$ is a \emph{normal submonoid} of $A$ if there exists a congruence (that is, an internal equivalence relation) $E\subseteq A\times A$ such that $M=[1]_E$. Similarly, we say that $M$ is a \emph{positive cone} of $A$ if there exists an internal preorder (that is, an internal reflexive and transitive relation) $P\subseteq A\times A$ such that $M=[1]_P$. Finally, we say that $M$ is a \emph{clot} of $A$ if there exists an internal reflexive relation $S\subseteq A\times A$ such that $M=[1]_S$.

\begin{proposition}
Let $M$ be a submonoid of the monoid $A$.
\begin{enumerate}
\item The submonoid $M$ is a normal submonoid of $A$ if and only if $M$ is the zero-class of $\sim_{M}$.
\item The submonoid $M$ is a positive cone of $A$ if and only if $M$ is the zero-class of $\leq_{M}$.
\item If the submonoid $M$ is a clot of $A$, then $M=[1]_{R_M}$.
\item If $R_M$ is an internal relation and $M=[1]_{R_M}$, then $M$ is a clot of $A$.
\item The condition $M=[1]_{R_M}$ is equivalent to the following: for every $x,y\in A$ and every $u\in M$,
\[
xy=1 \;\Rightarrow\; xuy \in M.
\]
\item If $A$ is a group, then $M=[1]_{R_M}$ if and only if $M$ is closed under conjugation in $A$.
\end{enumerate}
\end{proposition}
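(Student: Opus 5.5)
I will prove the six items in turn, starting with the general computation of the zero-classes. For any subset $M$ and any $u\in A$, the definitions unwind directly. First, $1\sim_M u$ means that $xy\in M\Leftrightarrow xuy\in M$ for all $x,y$. Second, $1\leq_M u$ means that $xy\in M\Rightarrow xuy\in M$ for all $x,y$. Third, $1R_Mu$ means that $xy=1\Rightarrow xuy\in M$ for all $x,y$.

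\textbf{Items 1 and 2.} By the first Proposition, $\sim_M$ is a congruence and $\leq_M$ is an internal preorder. Hence the ``if'' directions are immediate from the definitions of normal submonoid and positive cone.

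For the ``only if'' directions, suppose $M=[1]_P$ with $P$ an internal preorder; the congruence case is identical, with $E$ symmetric. I want $[1]_{\leq_M}=M$.

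\emph{Inclusion $\subseteq$.} If $1\leq_M u$, take $x=y=1$. Since $1\in M$, we get $u\in M$.

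\emph{Inclusion $\supseteq$.} Let $u\in M$, so $1Pu$. If $xy\in M$, compatibility of $P$ gives $xy=x1y\,P\,xuy$. We also have $1Pxy$, so transitivity gives $1Pxuy$, that is, $xuy\in M$.

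For $\sim_M$ we must also show the converse implication $xuy\in M\Rightarrow xy\in M$. Symmetry gives $xuy\,E\,xy$, and transitivity from $1Exuy$ then gives $1Exy$.

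\textbf{Item 3, the main point.} Let $M=[1]_S$ with $S$ an internal reflexive relation; here transitivity is unavailable. For $\subseteq$, the argument is as above with $x=y=1$, using $1\in M$.

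For $\supseteq$, let $u\in M$ and $xy=1$. From $1Su$, reflexivity ($xSx$, $ySy$) and compatibility with multiplication, we get $x1y\,S\,xuy$. This reads $1\,S\,xuy$, so $xuy\in M$. Hence $1R_Mu$.

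The key trick is that the hypothesis $xy=1$ replaces transitivity: the left-hand side of the relation is already $1$.

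\textbf{Item 4.} This is immediate from the definition of clot, taking $S=R_M$. Note that $R_M$ is reflexive by item 3 of the first Proposition, since $1\in M$.

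\textbf{Item 5.} The inclusion $[1]_{R_M}\subseteq M$ always holds: take $x=y=1$. The reverse inclusion $M\subseteq[1]_{R_M}$ is, by the unwinding above, literally the stated condition.

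\textbf{Item 6.} In a group, $xy=1$ means $y=x^{-1}$. The condition of item 5 then becomes $xux^{-1}\in M$ for all $x\in A$ and $u\in M$, which is closure under conjugation.

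I expect the only delicate step to be item 3. The arguments for items 1 and 2 use transitivity, so they cannot simply be copied there, and one must instead observe that the defining condition of $R_M$ makes $x1y=1$.
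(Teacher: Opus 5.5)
Your proof is correct and follows the paper's own argument. For items 3--5, the proof of the paper's Proposition~3 uses exactly your two observations: taking $x=y=1$ gives $[1]_{R_M}\subseteq M$, and reflexivity plus compatibility give $xy\,S\,xuy$, which becomes $1\,S\,xuy$ when $xy=1$. Items 1--2 are only cited from earlier work, and your direct arguments for them via compatibility and transitivity are the standard ones. One small point: in the $\subseteq$ part of item 3 you do not need $1\in M$, since the premise $x1y=1$ holds automatically for $x=y=1$.
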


In what follows, we analyze the $M$-relation in more detail.


\section{Semi-congruences of monoids}

In \cite{MFS3}, it was shown that for any submonoid $M$ of a monoid $A$, the relations $\sim_M$ and $\leq_M$ introduced above are, respectively, an internal equivalence relation (that is, a congruence) and an internal preorder on $A$. Moreover, they provide intrinsic characterizations of normal submonoids and positive cones, as recalled in the proposition above. 

Whenever $R_M$ is an internal relation, we call it the \emph{$M$-reflexive relation} and, in this case, the condition  $M = [1]_{R_M}$ is a characterization for $M$ being a clot as we recall next (\cite{MFS3}, Proposition 3).

\begin{proposition}
 Let $M$ be a submonoid of a monoid $A$. Then $M$ is the zero-class of the reflexive relation $R_M$ if and only if the following condition holds:
\begin{quote}
(R) For every $x,y\in A$ and every $u\in M$,
\[
xy = 1 \;\Rightarrow\; xuy \in M.
\]
\end{quote}
If $M$ is a clot, then $M = [1]_{R_M}$, and the converse holds provided that the relation $R_M$ is internal.
\end{proposition}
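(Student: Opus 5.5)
The proof will unwind the definition of $R_M$ at the pair $(1,u)$ and then use reflexivity and internality for the two clot statements.

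\textbf{Step 1: rewriting the zero-class.} By definition, $u\in[1]_{R_M}$ means $1\,R_M\,u$, that is,
\[
\forall x,y\in A,\ (xy=1)\Rightarrow xuy\in M,
\]
since $x1y=xy$. So (R) says exactly that every $u\in M$ lies in $[1]_{R_M}$, i.e.\ $M\subseteq[1]_{R_M}$.

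\textbf{Step 2: the reverse inclusion always holds.} If $u\in[1]_{R_M}$, take $x=y=1$. Then $xy=1$, so $u=1u1\in M$. Hence $[1]_{R_M}\subseteq M$ for any submonoid $M$. This shows that $M=[1]_{R_M}$ if and only if (R) holds, which is the first claim.

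\textbf{Step 3: a clot satisfies $M=[1]_{R_M}$.} Suppose $M=[1]_S$ for an internal reflexive relation $S$. By Step 2 it suffices to verify (R). Let $u\in M$ and $xy=1$, so $1\,S\,u$. Reflexivity gives $x\,S\,x$ and $y\,S\,y$. Since $S$ is a submonoid of $A\times A$, it is closed under products, and $(x,x)(1,u)(y,y)=(xy,xuy)=(1,xuy)\in S$. Hence $xuy\in[1]_S=M$, as required.

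\textbf{Step 4: the converse when $R_M$ is internal.} If $R_M$ is internal, it is reflexive because $1\in M$ (part 3 of the first proposition). If moreover $M=[1]_{R_M}$, then $M$ is by definition the zero-class of an internal reflexive relation, hence a clot.

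There is no real obstacle. The only point needing care is Step 3: the compatibility of $S$ with multiplication must be applied to the triple product $(x,x)(1,u)(y,y)$, using reflexivity to supply the outer factors. Everything else is direct unwinding of the definitions, consistent with parts 3–5 of the preceding proposition.
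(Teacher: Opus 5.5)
Your proposal is correct and follows essentially the same route as the paper: the $x=y=1$ substitution gives $[1]_{R_M}\subseteq M$, the inclusion $M\subseteq[1]_{R_M}$ is identified with (R), and for a clot $M=[1]_S$ you combine reflexivity with compatibility, $(x,x)(1,u)(y,y)=(1,xuy)\in S$, to obtain (R). The converse under internality of $R_M$ follows directly from the definition of a clot, which is how the paper handles it as well.
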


\begin{proof}
We first show that $[1]_{R_M} \subseteq M$. Indeed, if $1 R_M u$, then for every $x,y\in A$ we have
\[
x1y = 1 \;\Rightarrow\; xuy \in M.
\]
In particular, taking $x=y=1$, we conclude that $u\in M$.

Conversely, $M \subseteq [1]_{R_M}$ if and only if condition (R) is satisfied. Consequently, under condition (R), we have
$M = [1]_{R_M}$.

Now assume that $M$ is a clot, that is, the zero-class of some internal reflexive relation $S$ on $A$. If $1Su$, then by reflexivity and compatibility of $S$ we have $xySxuy$. Thus, whenever $xy=1$, we obtain $1Sxuy$, that is, $xuy\in M$. Hence condition (R) holds and therefore $M=[1]_{R_M}$.

If $R_M$ is compatible, then the converse implication holds.
\end{proof}

The proof that the relation $R_M$ is not always a semi-congruence was given by the first author by constructing the following example.

\begin{example} \label{example: bicyclic}\em
Recall that the {\em bicyclic} monoid $B$ is the quotient of the free monoid on two generators $x,y$ by the congruence generated by the single relation $xy\sim 1$. Every element of $B$ is uniquely representable as $y^n x^m$ for some natural numbers $n,m$. Consider the submonoid
\[
M=\{\,y^n x^m \mid n,m\in 2\mathbf{N}\,\}
\]
of $B$.

We show that $R_M$ is not an internal relation. First, we have $(y^2x) R_M (yx^2)$, since for any elements $y^n x^m$ and $y^{n'} x^{m'}$ of $B$ such that
\[
(y^n x^m)(y^2x)(y^{n'} x^{m'})=1,
\]
it follows that $n=0=m'$ and $m=n'+1$. Hence
\[
x^{n'+1}(yx^2)y^{n'}=x^2\in M.
\]

Similarly, we have $x R_M y$, since $(y^n x^m)x(y^{n'} x^{m'})=1$ implies $n=0=m'$ and $m+1=n'$, from which it follows that
\[
x^m y y^{m+1}=y^2\in M.
\]

However, their product does not satisfy the relation: we do not have $(yx) R_M (y^2x^2)$, since
\[
x(y^2x^2)y=yx\notin M.
\]
And so the relation is not internal.
\end{example}

Another way to see that $R_M$ is not always internal is suggested by Proposition~2 (item~(3)), where the condition $M = [1]_{R_M}$ is shown to be necessary for $R_M$ to be a semi-congruence. Contrary to what we stated in Example~(J) of \cite{MFS3}, which we recall below, that example exhibits a submonoid $M$ for which $[1]_{R_M} \neq M$ and consequently  the associated relation $R_M$ is not internal.


\begin{example}\em (Example (J), \cite{MFS3})
Let $A=\Set(\mathbf{N},\mathbf{N})$ be the monoid of all endofunctions of the natural numbers, with composition, and let $M$ be the submonoid generated by the function $u=2\times -$. Then $M$ consists of the functions $u^n=2^n\times -$ for $n\in\mathbf{N}_0$, and $u^n$, for $n>0$, does not belong to the zero-class of the relation $R_M$.

Indeed, let $f,g\in A$ be defined by $g(x)=x+1$ and by $f(x)=x-1$ if $x>1$, and $f(1)=1$. Then $f\cdot g=1_{\mathbf{N}}$, but
$f\cdot u^n\cdot g\notin M$, since
\[
f\cdot u^n\cdot g(x)=2^n(x+1)-1
\]
for $x\in\mathbf{N}$. Thus $[1]_{R_M}=\{1_{\mathbf{N}}\}$ and $R_M$ is not a semi-congruence.
\end{example}

For the trivial submonoids $M=\{1_A\}$ and $M=A$, the relation $R_M$ is always a semi-congruence on $A$.

There is a large class of monoids for which these reflexive relations are semi-congruences for every submonoid $M$. Indeed, the relation $R_M$ is compatible with the monoid operation as soon as the following condition holds:
\begin{equation}
(*)\qquad
\forall x,y,s,t\in A,\ (xy=1,\ xs\in M,\ ty\in M)\Rightarrow ts\in M.
\end{equation}

This condition follows from the requirement that every left (or right) invertible element of $A$ is invertible, namely,
\[
\forall x,y\in A,\ (xy=1\Rightarrow yx=1),
\]
which characterizes Dedekind finite monoids, also called von Neumann finite or directly finite monoids.

In Dedekind finite monoids, condition $(*)$ holds: from $xs,ty\in M$ we obtain $tyxs\in M$, and since $yx=1$ whenever $xy=1$, we conclude that $ts\in M$.

Dedekind finite monoids include groups, commutative monoids, finite monoids, and cancellative monoids, but not all monoids. For example, the monoid $\Set(X,X)$ for any infinite set $X$, as well as the bicyclic monoid, are not Dedekind finite.

The condition of being Dedekind finite is stronger than condition $(*)$, and condition $(*)$ is not necessary for the compatibility of $R_M$, as shown by Example~(I) in \cite{MFS3}.

\begin{remark}\em
In the category of groups, if $M$ is a subgroup of a group $A$, condition (R) means that $M$ is a normal subgroup of $A$. In this case, the congruence $R_M$ coincides with the classical equivalence relation induced by $M$,
\[
a\sim b \;\Leftrightarrow\; ab^{-1}\in M,
\]
which is a congruence if and only if $M$ is normal.

If $A$ is a group and $M$ is a submonoid of $A$, condition (R) means that $M$ is closed under conjugation. Such pairs $(A,M)$ form the objects of a category equivalent to that of preordered groups, by defining a preorder on $A$ by
\[
a\leq b \;\Leftrightarrow\; ba^{-1}\in M,
\]
that is, $b\in Ma=aM$, as described in \cite{CMFM}.
\end{remark}

\section{When is $R_M$ a semi-congruence?}

To answer this question we consider a more general categorical context.

Let $\mathcal{C}$ be the category whose objects are pairs $(A,M)$, where $A$ is a monoid and $M$ is a submonoid of $A$, and whose morphisms
$f\colon (A,M)\to (A',M')$ are monoid homomorphisms $f\colon A\to A'$ such that $f(M)\subseteq M'$. We consider the following chain of full subcategories of $\mathcal{C}$:
\begin{equation}
\mathcal{C}_5 \subset \mathcal{C}_4 \subset \mathcal{C}_3 \subset \mathcal{C}_2 \subset \mathcal{C}_1 \subset \mathcal{C},
\end{equation}
where:
\begin{enumerate}
\item $\mathcal{C}_1$ consists of all pairs $(A,M)$ such that $R_M$ is an internal relation;
\item $\mathcal{C}_2$ consists of all pairs $(A,M)$ satisfying condition $(*)$:
\[
\forall x,y,s,t\in A,\ (xy=1,\ xs\in M,\ ty\in M)\Rightarrow ts\in M;
\]
\item $\mathcal{C}_3$ consists of all pairs $(A,M)$ such that $A$ is Dedekind finite;
\item $\mathcal{C}_4$ consists of all pairs $(A,M)$ such that $A$ is a group;
\item $\mathcal{C}_5$ consists of all pairs $(A,M)$ such that both $A$ and $M$ are groups.
\end{enumerate}

All these inclusions are strict:
\begin{enumerate}
\item $\mathcal{C}_1 \neq \mathcal{C}$, by Examples~1 and~2;
\item $\mathcal{C}_2 \neq \mathcal{C}_1$, as shown by the monoid $A=\Set(\mathbf{N},\mathbf{N})$ under composition with $M=\{1_{\mathbf{N}}\}$ (see \cite{MFS3}, Example~I(ii));
\item $\mathcal{C}_3 \neq \mathcal{C}_2$, as shown by $A=\Set(\mathbf{N},\mathbf{N})$ with $M=A$ (see \cite{MFS3}, Example~I(i));
\item $\mathcal{C}_4 \neq \mathcal{C}_3$, since not every Dedekind finite monoid is a group;
\item $\mathcal{C}_5 \neq \mathcal{C}_4$, since in $\mathcal{C}_5$ both $A$ and $M$ are required to be groups.
\end{enumerate}

In order to study conditions ensuring that $M$ is the zero-class of $R_M$, we introduce the following full subcategories of $\mathcal{C}$:
\begin{enumerate}
\item $\mathcal{C}_0$, consisting of all pairs $(A,M)$ such that $M=[1]_{R_M}$;
\item $\mathcal{C}_{0.5}$, consisting of all pairs $(A,M)$ such that $M$ is a clot;
\item $\mathcal{C}_{(i,0)}=\mathcal{C}_i\cap \mathcal{C}_0$, for $i=1,\dots,5$.
\end{enumerate}

These give rise to another chain of full subcategories:
\begin{equation}
\mathcal{C}_{(1,0)} \subset \mathcal{C}_{0.5} \subset \mathcal{C}_0 \subset \mathcal{C}.
\end{equation}

The inclusions satisfy the following properties:
\begin{enumerate}
\item $\mathcal{C}_0\neq\mathcal{C}$, since there are examples for which $[1]_{R_M}\neq M$, such as Example~(J) in \cite{MFS3};
\item $\mathcal{C}_{0.5}\subset \mathcal{C}_0$, since clots are submonoids $M$ satisfying (\cite{MFS3} Thm 4(f))
\begin{eqnarray}\label{eq: clot}
\forall n\in\mathbf{N},\ \forall a_1,\ldots,a_{n+1}\in A,\ \forall u_1,\ldots,u_n\in M,\nonumber\\
a_1\cdots a_{n+1}=1 \Rightarrow a_1u_1a_2\cdots a_nu_na_{n+1}\in M,
\end{eqnarray}
which implies
\begin{equation}\label{eq: M=[0]R_M}
\forall x,y\in A,\ \forall u\in M,\ xy=1 \Rightarrow xuy\in M,
\end{equation}
that is, $M=[1]_{R_M}$;
\item It is not known whether the inclusion $\mathcal{C}_{(1,0)}\subset \mathcal{C}_{0.5}$ is strict, that is, whether there exists a submonoid $M$ of a monoid $A$ such that
$M=[1]_S$ for some semi-congruence $S$ (and hence $M=[1]_{R_M}$) while $R_M$ is not internal.
\end{enumerate}

Summarizing, we obtain the following diagram of full subcategories of $\mathcal{C}$:
\begin{align*}
\xymatrix{&&&&& \mathcal{C}\\
&&&& \mathcal{C}_{1} \ar[ur] & \mathcal{C}_{0.5}\ar[u] & \mathcal{C}_{0}\ar[ul]\\
&&& \mathcal{C}_{2}\ar[ur] && \mathcal{C}_{(1,0)}\ar[ul] \ar[u]\ar[ur]\\
&&\mathcal{C}_{3}\ar[ur] && \mathcal{C}_{(2,0)}\ar[ul] \ar[ur]\\
&\mathcal{C}_{4}\ar[ur] && \mathcal{C}_{(3,0)}\ar[ul] \ar[ur]\\
\mathcal{C}_{5}\ar[ur] && \mathcal{C}_{(4,0)}\ar[ul] \ar[ur]\\
 & \mathcal{C}_{(5,0)}\ar[ul] \ar[ur]
}
\end{align*}

Here $\mathcal{C}_{(1,0)}$ is the category of all pairs $(A,M)$ such that $R_M$ is internal and $M$ is the zero-class of $R_M$. In particular, $\mathcal{C}_{(4,0)}$ is equivalent to the category of preordered groups, and $\mathcal{C}_{(5,0)}$, whose objects are pairs $(A,M)$ with $M$ a normal subgroup of the group $A$, is equivalent to the category whose objects are congruences in groups.

We now turn to preordered monoids, that is, monoids equipped with an internal reflexive and transitive relation.

\begin{enumerate}
\item Let $\mathcal{D}$ be the full subcategory of $\mathcal{C}$ consisting of all pairs $(A,M)$ such that $M=[1]_{\leq_M}$, that is,
\[
a\in M \;\Longleftrightarrow\; \forall x,y\in A,\ (xy\in M \Rightarrow xay\in M).
\]
By Proposition~2 of \cite{MFS3}, $M$ is the positive cone of some internal preorder if and only if it is the zero-class of the $M$-preorder. Thus, $\mathcal{D}$ may be regarded as the category of preordered monoids determined by their positive cones. Unlike $\mathcal{C}_{(4,0)}$, which is equivalent to the category of preordered groups, $\mathcal{D}$ is strictly smaller than the category of all preordered monoids, since not every preorder is determined by its positive cone (\cite{MFS2}, Example~1).

\item Let $\mathcal{D}_r$ be the full subcategory of $\mathcal{D}$ consisting of all pairs $(A,M)$ such that $M$ is {\em right homogeneous}, that is, $aM\subseteq Ma$ for all $a\in A$. Equivalently, the preorder defined by $a\leq_r b$ if and only if $b\in Ma$ is internal (\cite{MFS2}, Proposition~2), and $M$ is its positive cone.

\item In a similar way, let $\mathcal{D}_l$ be the full subcategory of $\mathcal{D}$ consisting of all pairs $(A,M)$ such that $M$ is {\em left homogeneous}, that is, $Ma\subseteq aM$ for all $a\in A$. Equivalently, the preorder defined by $a\leq_l b$ whenever $b\in aM$ is internal, with $M$ as its positive cone.
\end{enumerate}

In this situation we have strict inclusions
\[
\mathcal{C}_{(4,0)} \subset \mathcal{D}_i \subset \mathcal{D} \subset \mathcal{C}_{0.5} \subset \mathcal{C},
\qquad i\in\{r,l\}.
\]

Indeed:
\begin{enumerate}
\item $\mathcal{D}\neq \mathcal{C}_{0.5}$;
\item $\mathcal{D}_i\neq \mathcal{D}$, since the $M$-preorder strictly contains the preorders $a\leq b$ if $b\in Ma$ or $b\in aM$;
\item $\mathcal{C}_{(4,0)}\neq \mathcal{D}_i$, since in $\mathcal{C}_{(4,0)}$ the monoid $A$ is a group and $M$, being closed under conjugation, is both left and right homogeneous.
\end{enumerate}

We say that $M$ is {\em homogeneous} if it is both left and right homogeneous.

In \cite{MFS2}, the term {\em normal submonoid} was used for what we now call homogeneous submonoids (and {\em left normal} or {\em right normal} for the one-sided notions). This terminology was inappropriate, since normal submonoids in our present sense need not be homogeneous, as shown by the following example.

\begin{example}\em
Let $A=\Set(X,X)$ be the monoid of endofunctions of a finite set $X$ with at least two elements, under composition, and let $M=S_X$ be the submonoid of all bijections. Then $M$ is a normal submonoid of $A$, since it is the zero-class of the $M$-congruence on $A$,
\[
\forall f,g\in A,\ \forall u\in M,\ (f\circ g\in M \Leftrightarrow f\circ u\circ g\in M).
\]
However, $f\circ M = M\circ f$ does not hold for every $f\in A$: it suffices to take $f$ to be a constant map.
\end{example}

Finally, let $\mathcal{D}_h$ be the full subcategory of $\mathcal{D}_r$ (or $\mathcal{D}_l$) consisting of all pairs $(A,M)$ such that $M$ is a group. These are precisely the preordered monoids for which the preorder defined by the positive cone $M$ is a congruence on $A$.

Indeed, if $M$ is a group and $a\leq_r b$, then $b=ua$ for some $u\in M$, and hence $a=u^{-1}b$, so $b\leq a$. Conversely, if the preorder is symmetric, then $u\geq 0$ implies $u\leq 0$, so there exists $v\in M$ such that $0=vu$, showing that every $u\in M$ is invertible. 
 Thus, $\mathcal{D}_h$ is a subcategory of congruences in monoids. 
 
 Another approach to the study of congruences in monoids is presented in~\cite{Elgueta} where the classical equivalence between congruences in groups and normal subgroups is extended to the case of arbitrary monoids.

\section*{Declarations of interest}

The authors declare no competing interests.

\section*{Funding}
The second author (N.~Martins-Ferreira) acknowledges the Portuguese Foundation for Science and Technology (Fundação para a Ciência e Tecnologia) FCT/MCTES (PIDDAC) through the following Projects: Associate Laboratory ARISELA/P/0112/2020; UIDP/04044/2020; UIDB/04044/2020; PAMI - ROTEI-
RO/0328/2013 (N° 022158); MATIS (CENTRO-01-0145-FEDER-000014 - 3362); PRR-C05-i03-I-000251 (Fruit-PV); Generative.Thermodynamic; by CDRSP and ESTG from the Polytechnic of Leiria.

The third author acknowledges financial support by the Center for Mathematics of the University of Coimbra (CMUC, https://doi.org/10.54499/UID/00324/2025) under the Portuguese Foundation for Science and Technology (FCT), Grants UID/00324/2025 and UID/PRR/00324/2025.

\newpage 

\bigskip
\noindent
Michael Hoefnagel \\ 
\noindent
Department of Mathematical Sciences\\
Stellenbosch University\\
Private Bag X1, Matieland 7602\\
South Africa\\
\quad \\ 
National Institute for Theoretical and Computational Sciences (NITheCS)\\
South Africa \\ 
\texttt{mhoefnagel@sun.ac.za}

\bigskip
\noindent
Manuela Sobral\\
\noindent
Department of Mathematics\\
University of Coimbra\\
Apartado 3008\\
3001--454 Coimbra\\
Portugal\\
\texttt{sobral@mat.uc.pt} \\ 

\bigskip 
\bigskip
\noindent
Nelson Martins-Ferreira\\
\noindent
Departamento de Matemática\\
Escola Superior de Tecnologia e Gestão\\
Politécnico de Leiria\\
2411--901 Leiria\\
Portugal\\
\texttt{martins.ferreira@ipleiria.pt}

\end{document}